\numberwithin{equation}{section}
\DeclarePairedDelimiter\floor{\lfloor}{\rfloor}
\newtheorem{theorem}{Theorem}[section]
\newtheorem{lemma}[theorem]{Lemma}
\theoremstyle{definition}
\newtheorem{definition}[theorem]{Definition}
\newtheorem{remark}[theorem]{Remark}
\newcommand{\PIVAL}{3.14159265358979323846264338} %There is no easily accessible constant
\newcounter{i} % Counter for the while loop
\newcommand{\Circulant}[2] { %Accepts 2 argument, the number of nodes, and the list of vertices
	\begin{tikzpicture}
	\setcounter{i}{0}
	\whiledo{\value{i}<#1}{ %Start counting through the nodes
		\FPmul\tempA{2}{\thei} %For the X,Y Formulae start with 2*i
		\FPdiv\tempB{\PIVAL}{#1} % pi/(nodes)
		\FPmul\tempC{\tempA}{\tempB} % the product of the previous
		\FPcos\varX{\tempC} % cos that for x
		\FPsin\varY{\tempC} % sin it for y
		% \FPmul\varY{\varY}{#3} % In case of magnification, add a third arg
		% \FPmul\varX{\varX}{#3}
		\stepcounter{i} % Count up 1
		\FPround\varX{\varX}{3}
		\FPround\varY{\varY}{3}
		\node (\thei) at (\varX,\varY)[place]{ }; % Draw the nodes
		\foreach \x in {#2} { % For all the variables in the Vertice list
			\pgfmathparse{mod(\x+\thei,#1)} % Do the modular count forward
			\let\tempB\pgfmathresult
			\pgfmathparse{mod(\thei-\x,#1)} % And the modular count backward
			\let\tempA\pgfmathresult
			\ifthenelse{\lengthtest{\tempA pt < 1 pt}}{\FPadd\tempA{\tempA}{#1}}{}
			\ifthenelse{\lengthtest{\tempB pt < 1 pt}}{\FPadd\tempB{\tempB}{#1}}{}
			% Fix some mod problems with 0 or negative numbers
			\ifthenelse{\lengthtest{\tempA pt > \thei pt}}{}{\ifthenelse{\thei = \tempA}{}{\draw [] (\thei) to (\tempA)}};
			\ifthenelse{\lengthtest{\tempB pt > \thei pt}}{}{\ifthenelse{\thei = \tempB}{}{\draw [] (\thei) to (\tempB)}};
			%And draw the lines!
		}
	}
	\end{tikzpicture}
}
\begin{document}

\tikzstyle{place}=[draw,circle,minimum size=0.5mm,inner sep=1pt,outer sep=-1.1pt]

%%%%%%%%%%%%%%%%%%%%%%%%%%%%%%%%%%%%%%%%%%%%%%%%%%%%%%%%%%%%%%%%%%%%%%%%%%%%%%

\title[Shellability, vertex decomposability, and the lexicographical
product]{Shellability, vertex decomposability, and lexicographical
products of graphs}
\thanks{Last updated: \today}
\thanks{Research of the two authors supported in part by 
NSERC Discovery Grants.}

\author{Kevin N. Vander Meulen}
\address{Department of Mathematics\\
Redeemer University College, Ancaster, ON, L9K 1J4, Canada}
\email{kvanderm@redeemer.ca}

\author{Adam Van Tuyl}
\address{Department of Mathematics \& Statistics\\
McMaster University \\
Hamilton, ON, L8S 4L8, Canada}
\email{vantuyl@math.mcmaster.ca}

\keywords{independence complex,
vertex decomposable, shellable, circulant graphs}
\subjclass[2010]{05E45, 13F55}

\begin{abstract}
We investigate when the independence
complex of $G[H]$, the lexicographical product of two graphs $G$ and $H$,
is either vertex decomposable or shellable. 
As an application,  we construct an infinite family of 
graphs with the property that every graph in this family has the 
property that the independence complex of each graph is shellable, but not 
vertex decomposable.  
\end{abstract}

\maketitle

%%%%%%%%%%%%%%%%%%%%%%%%%%%%%%%%%%%%%%%%%%%%%%%%%%%%%%%%%%%%%%%%%%%%%%%%%%%%%%%
\section{Introduction}\label{sec:intro}

Let $G = (V_G,E_G)$ and $H = (V_H,E_H)$ be two finite simple graphs.
There are a number of constructions in the
literature that enable
one to make a ``product'' of two graphs, that is, a new graph
on the vertex set $V_G \times V_H$. 
In this paper we are interested in the lexicographical product.
The {\it lexicographical product} of $G$
and $H$, denoted $G[H]$, is the graph with the vertex set $V_G \times V_H$, 
such that 
$(w,x)$ and $(y,z)$ are adjacent if $\{w,y\} \in E_G$ or if $w=y$ and
$\{x,z\} \in E_H$.  

Given some property that both $G$ and $H$ possess, 
it is then natural to ask if $G[H]$ also possess this property.  
The property of being well-covered is an example 
of such an inherited property.  
Recall that a subset $W \subseteq V_G$ of a graph $G$
is a {\it vertex cover} if $e \cap W \neq \emptyset$ for all $e \in E_G$.  
A graph is {\it well-covered} if every minimal (ordered with respect to
inclusion) vertex cover has the same cardinality.  
Topp and Volkmann \cite{TV} showed that 
$G$ and $H$ are well-covered if and only if $G[H]$ is well-covered.

In this note we focus on the independence complex of $G[H]$.
Recall that a subset $W \subseteq V_G$ is an {\it independent
set} if for all $e \in E_G$, $e \not\subseteq W$.  Equivalently,
$W \subseteq V_G$ is an independent set if and only if $V_G \setminus W$
is a vertex cover of $G$.  The independence complex of a graph
$G$, denoted ${\rm Ind}(G)$, is the simplicial complex
\[{\rm Ind}(G) = \{W \subseteq V_G ~|~ \mbox{$W$ is a independent set}\}.\]
Because of the duality between vertex covers and independent sets,
${\rm Ind}(G)$ is pure (see the next section) 
if and only if $G$ is well-covered.  
Topp and Volkmann's result can be restated
as saying ${\rm Ind}(G[H])$
is pure if and only if ${\rm Ind}(G)$ and ${\rm Ind}(H)$ are both pure.

If a simplicial complex is pure, it may indicate that the complex
has a richer combinatorial or topological structure.  Two examples
relevant to this paper are vertex decomposability or
shellability.   Inspired by Topp and Volkmann's result,
we can
ask if ${\rm Ind}(G)$ and ${\rm Ind}(H)$ are both shellable,
respectively, vertex decomposable, does ${\rm Ind}(G[H])$ also
inherit this property?   The purpose of this short note is to prove
that this natural guess is too naive.  In fact, ${\rm Ind}(G[H])$ is
rarely shellable or vertex decomposable.  Precisely, we prove:

\begin{theorem} \label{maintheorem}
Let $G$ and $H$ be finite simple graphs.  
\begin{enumerate}
\item[$(a)$] Suppose that $G$ is a graph of isolated vertices. 
Then ${\rm Ind}(H)$ is vertex decomposable,
respectively shellable, if and only if ${\rm Ind}(G[H])$ is
vertex decomposable, respectively, shellable.
\item[$(b)$] Suppose that $G$ is not a graph of isolated vertices. 
Then ${\rm Ind}(G[H])$ is vertex decomposable if and only if 
${\rm Ind}(G)$ is vertex decomposable and $H = K_m$ for some $m\geq 1$.
\item[$(c)$]  Suppose that $G$ is not a graph of isolated vertices. 
If ${\rm Ind}(G)$ is shellable and $H = K_m$, then
${\rm Ind}(G[H])$ is shellable.  Furthermore, if ${\rm Ind}(G[H])$
is shellable, then $H = K_m$ for some  $m \geq 1$.
\end{enumerate}
\end{theorem}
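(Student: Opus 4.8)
The plan is to treat the three parts through a few structural reductions, all built on the description of the faces of $\Ind(G[H])$: a set $W\subseteq V_G\times V_H$ is independent exactly when its projection $\pi(W)\subseteq V_G$ is independent in $G$ and, for each $w\in\pi(W)$, the fiber $\{x:(w,x)\in W\}$ is independent in $H$. I would use repeatedly that for a vertex $v$ the deletion of $v$ from $\Ind(G)$ is $\Ind(G-v)$ and the link is $\operatorname{link}_{\Ind(G)}(v)=\Ind(G-N_G[v])$, together with the standard facts that the class of vertex decomposable complexes is closed under links and deletions, while the class of shellable complexes is closed under links. For part $(a)$, when $G$ is $n$ isolated vertices the product $G[H]$ is a disjoint union of $n$ copies of $H$, so $\Ind(G[H])$ is the $n$-fold join $\Ind(H)^{*n}$; I would then invoke the standard fact that a join is vertex decomposable (respectively shellable) if and only if each factor is, which gives both directions simultaneously.

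For the ``if'' directions of $(b)$ and $(c)$ I would isolate one lemma: if $\Ind(G)$ is vertex decomposable (respectively shellable) and $G'$ is obtained from $G$ by adding a true twin $v'$ of a vertex $v$, so that $N_{G'}[v']=N_G[v]\cup\{v'\}$, then $\Ind(G')$ is again vertex decomposable (respectively shellable). The mechanism is that $v'$ is a shedding vertex: its deletion is $\Ind(G'-v')=\Ind(G)$, good by hypothesis; its link is $\Ind(G'-N_{G'}[v'])=\Ind(G-N_G[v])=\operatorname{link}_{\Ind(G)}(v)$, good by closure under links; and the shedding condition holds because any face of the link avoids $N_G[v]$ and so can still absorb $v$, whence no facet of the link is a facet of the deletion. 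In the shellable case one then applies the standard gluing lemma that a shedding vertex with shellable link and deletion forces shellability. Since $G[K_m]$ is built from $G$ by successively adding true twins (expanding each vertex to an $m$-clique), iterating the lemma yields both ``if'' statements.

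For the converse directions I would first dispose of the vertex decomposability claim in $(b)$: granting $H=K_m$, the graph $G$ is the induced subgraph of $G[K_m]$ on one representative per fiber, so $\Ind(G)$ is obtained from $\Ind(G[K_m])$ by deleting all twin vertices, and deletions preserve vertex decomposability. It then remains to prove the obstruction common to $(b)$ and $(c)$: if $G$ has an edge and $H\neq K_m$, then $\Ind(G[H])$ is not shellable (hence not vertex decomposable). I would argue by contraposition and exhibit a link of $\Ind(G[H])$ that is not shellable, which suffices because links of shellable complexes are shellable. Fixing an edge $\{a,b\}\in E_G$ and nonadjacent $x,y\in V_H$, the choice $\sigma=\{v\}\times M$ with $M$ a maximal independent set of $H$ gives $\operatorname{link}(\sigma)=\Ind((G-N_G[v])[H])$, so I can shrink $G$ to an induced subgraph $G'$ dominated by the edge $\{a,b\}$ while keeping that edge. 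Using the identity $\overline{G[H]}=\overline{G}[\overline{H}]$ and the fact that $\overline{H}$ has an edge, the independent sets of $G'[H]$ split across the two fibers over $a$ and $b$, producing facets that meet pairwise only in faces of codimension at least two; this failure of connectivity in codimension one is what I would leverage to rule out shellability.

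The hard part is precisely this last step. The naive hope of realizing the four vertices $(a,x),(a,y),(b,x),(b,y)$, which induce a $4$-cycle whose independence complex is two disjoint edges, as a link of $\Ind(G[H])$, fails in general: common neighbors of $a$ and $b$ cannot be deleted without also deleting one of the two fibers, so the surviving $G'$ need not be a single edge and the link need not be disconnected. The genuine obstruction is the finer one of codimension-one connectivity of the facet--ridge graph, which I would have to verify directly from the edge-domination structure of $G'$, most likely splitting into the cases where $\Ind(H)$ is pure (well-covered) and where it is not. Showing that this obstruction is always present---and not merely in the smallest instance $G=K_2$, $H=\overline{K_2}$, where $\Ind(G[H])$ is literally two disjoint edges---is the technical heart of the proof.
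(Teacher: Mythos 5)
Parts of your proposal are sound: your part $(a)$ is essentially the paper's own argument (the paper cites Woodroofe for disjoint unions), and your true-twin shedding lemma correctly establishes the ``if'' directions of $(b)$ and $(c)$ --- it amounts to a self-contained proof of the special case of the Moradi--Khosh-Ahang expansion theorem \cite[Theorems 2.7, 2.12]{MK} that the paper invokes. But both converse directions have genuine gaps, and they sit exactly where the paper does its real work. The central gap is the one you flag yourself: you never prove that ${\rm Ind}(G[H])$ fails to be shellable when $G$ has an edge and $H \neq K_m$, which is what the converses of both $(b)$ and $(c)$ hinge on. Your plan --- take links of faces $\{v\}\times M$ to shrink $G$ to an induced subgraph $G'$ dominated by the edge $\{a,b\}$, then analyze codimension-one connectivity of ${\rm Ind}(G'[H])$ --- stalls exactly where you say it does, and the intermediate claim that independent sets of $G'[H]$ ``split across the two fibers over $a$ and $b$'' is false: if $G'$ is the path on $c,a,b,d$ (in that order), then $\{a,b\}$ dominates $G'$, yet $\{c,d\}$ is independent. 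The paper (following Hoshino) needs no localization at all. Shellability forces purity, hence $G[H]$, $G$, and $H$ are well-covered by Topp--Volkmann \cite{TV}; every facet $F$ of ${\rm Ind}(G[H])$ then has $|F|=\alpha(G)\alpha(H)$ and projects to a maximal independent set $\pi_1(F)$ of $G$, all of size $\alpha(G)$. If $F_1,\ldots,F_s$ were a shelling, take the first index $k$ with $\pi_1(F_k)\neq \pi_1(F_1)$; for every $i<k$ the sets $\pi_1(F_i)=\pi_1(F_1)$ and $\pi_1(F_k)$ are distinct maximal independent sets of $G$, so
\[
|F_i\cap F_k| \;\le\; (\alpha(G)-1)\alpha(H) \;<\; \alpha(G)\alpha(H)-1 \;=\; |F_k|-1,
\]
the strict inequality using $\alpha(H)\ge 2$. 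This contradicts the shelling condition, which demands some earlier facet meeting $F_k$ in exactly $|F_k|-1$ vertices. Your ``facets meet in codimension at least two'' intuition is precisely this count, but it must be run globally against the shelling order, where purity makes it a three-line argument --- not inside a link of an edge-dominated subgraph, where the structure remains complicated.

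The second gap is in your converse of $(b)$: the assertion that ``deletions preserve vertex decomposability'' is false in general. Every graph $G$ is an induced subgraph of its whiskering $G^{w}$ (attach a pendant vertex to each vertex of $G$), and ${\rm Ind}(G^{w})$ is always vertex decomposable; if vertex deletions preserved vertex decomposability, every independence complex would be vertex decomposable, contradicting, for instance, $C_{16}(1,4,8)$ from this very paper. So ``$G$ sits inside $G[K_m]$ as an induced subgraph, hence inherits vertex decomposability'' is not a proof. What you actually need is that deleting a \emph{true twin} preserves vertex decomposability --- exactly the nontrivial direction of \cite[Theorem 2.7]{MK}, which the paper cites rather than reproves, and for which your sketch supplies no argument.
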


\noindent
In the above statement, $K_m$ denotes the complete graph on $m$
vertices.
Note that when $G$ is a graph of $a$ isolated vertices,
then $G[H]$ is simply $a$ disjoint copies of $H$. So 
Theorem \ref{maintheorem} $(a)$ will follow from \cite[Theorem 20]{W}.  
Statements
$(b)$ and $(c)$, which are proved in 
Section 3, depend upon results of Hoshino \cite{HPhD}
and  Moradi and Khosh-Ahang \cite{MK}.

We conclude this paper with some applications to circulant graphs.
In particular, starting from a circulant graph
found in \cite{EVMVT}, we construct construct an infinite family of 
graphs with the property that every graph in this family has the 
property that the independence complex of each graph is shellable, but not 
vertex decomposable.  
To the best of our
knowledge, this is the first known infinite family with this property.

%%%%%%%%%%%%%%%%%%%%%%%%%%%%%%%%%%%%%%%%%%%%%%%%%%%%%%%%%%%%%%%%%%%%%%%%%%%%%%%%

\section{Background Definitions and Results}\label{sec:background} 

A simplicial complex $\Delta$ on a vertex set $V = \{x_1,\ldots,x_n\}$
is a subset of $2^V$ such that $(i)$ if $G \subseteq F \in \Delta$,
then $G \in \Delta$, and $(ii)$ $\{x_i\} \in \Delta$ for all $x_i \in V$.
Elements of $\Delta$ are called {\it faces}.  
The maximal faces of $\Delta$ with respect to inclusion are called
the {\it facets} of $\Delta$. 
A simplicial complex is called
{\it pure} if all its facets have the same dimension.  If $F_1,\ldots,F_s$
is a complete list of the facets of $\Delta$, then we sometimes write
$\Delta = \langle F_1,\ldots,F_s \rangle$.  

Given any simplicial complex $\Delta$ and face $F \in \Delta$,
we can create two new simplicial complexes.  The {\it deletion} of $F$
from $\Delta$
is 
${\rm del}_{\Delta}(F) = \{G \in \Delta ~|~ F \not\subseteq G \}.$
The {\it link} of $F$ in $\Delta$ is 
${\rm link}_\Delta(F) = \{G \in \Delta ~|~ F \cap G = \emptyset ~\mbox{and}~ F 
\cup G \in \Delta\}.$
When $F = \{x\}$ for a vertex $x \in V$, we shall abuse notation
and simply write ${\rm del}_{\Delta}(x)$ and ${\rm link}_{\Delta}(x)$.

\begin{definition}  Let $\Delta$ be a pure simplicial complex.
\begin{enumerate}
\item[$(i)$] $\Delta$ is {\it shellable} if there is an
ordering of the facets $F_1,\ldots,F_s$ of $\Delta$ such that for 
all $1 \leq j < i \leq s$, there is some 
$x \in F_i \setminus F_j$ and some $k \in \{1,\ldots,i-1\}$ such 
that $\{x\} = F_i \setminus F_k$.
\item[$(ii)$] $\Delta$ is {\it vertex decomposable} if
$(a)$  $\Delta$ is a simplex (i.e., has
a unique facet), or $(b)$ there exists a vertex $x \in V$
such that ${\rm del}_{\Delta}(x)$ and ${\rm link}_{\Delta}(x)$ are vertex
decomposable.  
\end{enumerate}
\end{definition}

Vertex decomposability and shellability are related as follows:

\begin{lemma}[{\cite[Corollary 2.9]{PB}}]\label{prop}
Let $\Delta$ be a pure simplicial complex.
If $\Delta$ is vertex decomposable, then $\Delta$ is shellable.
\end{lemma}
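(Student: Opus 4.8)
The plan is to argue by induction on the number of vertices $n = |V|$ of $\Delta$. If $\Delta$ is a simplex there is nothing to prove: it has a single facet, so the shelling condition over pairs $1 \le j < i \le s$ is vacuous. So suppose $\Delta$ is not a simplex and is pure of dimension $d$; by vertex decomposability there is a vertex $x$ for which both ${\rm del}_\Delta(x)$ and ${\rm link}_\Delta(x)$ are (pure and) vertex decomposable. Each of these complexes lives on fewer than $n$ vertices, so the inductive hypothesis makes each shellable: fix a shelling $F_1,\ldots,F_r$ of ${\rm del}_\Delta(x)$ and a shelling $G_1,\ldots,G_t$ of ${\rm link}_\Delta(x)$. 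Note that ${\rm link}_\Delta(x)$ is automatically pure of dimension $d-1$, since its facets are exactly the sets $F \setminus \{x\}$ where $F$ ranges over the facets of $\Delta$ containing $x$.

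The goal is then to splice these two shellings into a single shelling of $\Delta$. Every facet of $\Delta$ either avoids $x$, in which case it is a facet of ${\rm del}_\Delta(x)$, or contains $x$, in which case it has the form $G_j \cup \{x\}$ for a facet $G_j$ of the link. I would first dispose of the degenerate possibility that \emph{every} facet of $\Delta$ contains $x$: then $\Delta$ is a cone with apex $x$, one checks that ${\rm del}_\Delta(x) = {\rm link}_\Delta(x)$, and $G_1 \cup \{x\}, \ldots, G_t \cup \{x\}$ is seen directly to be a shelling of $\Delta$. Otherwise some facet of $\Delta$ avoids $x$, so ${\rm del}_\Delta(x)$ contains a face of dimension $d$; since it is pure this forces it to be pure of dimension $d$, whence $F_1,\ldots,F_r$ are precisely the facets of $\Delta$ not containing $x$. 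In this case I propose the order
\[
F_1, \ldots, F_r,\ G_1 \cup \{x\}, \ldots, G_t \cup \{x\}
\]
and claim it is a shelling of $\Delta$.

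It then remains to verify the shelling condition for each pair in this list, of which there are three kinds. For two of the $F$'s it is immediate from the shelling of ${\rm del}_\Delta(x)$, and for two of the $G_j \cup \{x\}$'s it follows from the shelling of ${\rm link}_\Delta(x)$, once one observes that a witnessing vertex $v \in G_i \setminus G_j$ still witnesses $(G_i \cup \{x\}) \setminus (G_k \cup \{x\}) = \{v\}$. The crux, and the step I expect to be the main obstacle, is the mixed pair in which an earlier $F_j$ precedes a later $G_i \cup \{x\}$: here the natural witnessing vertex is $x$ itself, and I must exhibit an earlier facet, necessarily one of the $F_k$'s, with $(G_i \cup \{x\}) \setminus F_k = \{x\}$, that is, a deletion-facet $F_k$ containing $G_i$. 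This is exactly where purity is indispensable: $G_i$ is a $(d-1)$-dimensional face of ${\rm del}_\Delta(x)$, and because that complex is pure of dimension $d$, the face $G_i$ extends to a facet $F_k = G_i \cup \{y\}$ with $y \neq x$, which supplies the required witness. Assembling the three cases completes the inductive step and hence the proof.
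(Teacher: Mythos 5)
Your proof is correct. One point of comparison to flag first: the paper does not actually prove this lemma at all --- it imports it by citation from Provan and Billera \cite[Corollary 2.9]{PB} --- so there is no in-paper argument to match; what you have written is a self-contained reconstruction of the standard (essentially Provan--Billera) argument. Your induction handles the two places where care is genuinely required. First, the paper's recursive definition of vertex decomposability only makes sense if ${\rm del}_\Delta(x)$ and ${\rm link}_\Delta(x)$ are themselves pure (otherwise the recursion is not even well-posed), and you correctly make this explicit and then exploit it. Second, your dichotomy --- either every facet contains $x$, in which case ${\rm del}_\Delta(x) = {\rm link}_\Delta(x)$ and the coned shelling $G_1 \cup \{x\}, \ldots, G_t \cup \{x\}$ works directly, or some facet avoids $x$, in which case purity forces ${\rm del}_\Delta(x)$ to have full dimension $d$ --- is exactly what is needed to make the mixed pairs in the spliced order $F_1,\ldots,F_r, G_1 \cup \{x\},\ldots,G_t \cup \{x\}$ succeed: the witness facet $F_k = G_i \cup \{y\}$ with $(G_i \cup \{x\}) \setminus F_k = \{x\}$ exists precisely because $G_i$ is a $(d-1)$-dimensional face of a pure $d$-dimensional deletion. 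This ``deletion facets first, then link facets coned with $x$'' ordering is the same splicing that underlies the cited result, so your route and the literature's coincide; the value of your write-up is that it makes the paper's black-box citation verifiable from the paper's own definitions.
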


The independence complex ${\rm Ind}(G)$ of graph $G$ is 
an example of a simplicial complex.  We will
say $G$ is vertex decomposable, respectively shellable,
if ${\rm Ind}(G)$ has this property.  

The following result, due to
Hoshino \cite{HPhD}, is the first of two key critical results
needed to prove Theorem \ref{maintheorem}.
In the proof, 
$\pi_1:V_G \times V_{H}
\rightarrow V_G$ denotes the projection $\pi_1((x_i,y_j)) = x_i$.
In addition, $\alpha(G)$ denotes the cardinality of the
largest independent set of $G$.  

\begin{theorem} \label{nonresult}
Suppose that $G$ is not the graph of isolated vertices.  
If $H \neq K_m$ for some $m\geq 1$, then $G[H]$ is not shellable.
\end{theorem}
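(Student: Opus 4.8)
The plan is to show that $\Ind(G[H])$, even when it happens to be pure, fails to be \emph{connected in codimension one} --- a property every shellable pure complex must possess --- which forces non-shellability. The first step is to describe the faces of $\Ind(G[H])$ explicitly. Writing $\pi_1$ for the projection onto $V_G$, an independent set $S$ of $G[H]$ is determined by its ``fibers'' $S_w = \{x \in V_H : (w,x) \in S\}$, and one checks directly from the adjacency rule of the lexicographical product that $S$ is independent if and only if $(i)$ the support $\pi_1(S) = \{w : S_w \neq \emptyset\}$ is an independent set of $G$, and $(ii)$ each nonempty fiber $S_w$ is an independent set of $H$. Consequently the facets of $\Ind(G[H])$ are exactly the sets $\bigcup_{w \in A} \{w\} \times I_w$, where $A$ is a maximal independent set of $G$ and each $I_w$ is a maximal independent set of $H$.

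Next I would dispose of the non-pure case: if $\Ind(G[H])$ is not pure then it is not shellable by the definition, so I may assume $\Ind(G[H])$ is pure. By the restatement of Topp and Volkmann's theorem in the introduction, $\Ind(H)$ is then pure, i.e.\ $H$ is well-covered; since $H \neq K_m$ we have $\alpha(H) \geq 2$, so \emph{every} fiber of \emph{every} facet has size $\alpha(H) \geq 2$. I also record the standard fact that a pure shellable complex is connected in codimension one. In a shelling $F_1,\dots,F_s$, the shelling condition applied with $j=1$ yields, for each $i \geq 2$, an index $k < i$ with $|F_i \setminus F_k| = 1$; purity then gives $|F_k \setminus F_i| = 1$ as well, so $F_i$ and $F_k$ meet in the common ridge $F_i \cap F_k$. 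Thus every facet is joined to an earlier one through a shared ridge, and the facet-ridge (dual) graph is connected.

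The heart of the argument is to show the dual graph of $\Ind(G[H])$ is disconnected. Suppose $F$ and $F'$ are facets supported on distinct maximal independent sets $A \neq A'$ of $G$. Since neither of two distinct maximal independent sets can contain the other, there is some $w \in A \setminus A'$; the entire fiber $\{w\} \times I_w$, of size $\alpha(H) \geq 2$, lies in $F \setminus F'$, so $|F \setminus F'| \geq 2$ and $F, F'$ cannot share a ridge. Hence two facets can be codimension-one adjacent only when they have the same $G$-support. Because $G$ is not a graph of isolated vertices it has an edge $\{a,b\}$, and extending $\{a\}$ and $\{b\}$ to maximal independent sets produces two distinct maximal independent sets of $G$; the facets supported on them therefore lie in different connected components of the dual graph. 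So $\Ind(G[H])$ is pure but not connected in codimension one, and hence not shellable.

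I expect the main obstacle to be the bookkeeping in the facet description together with the verification that no codimension-one adjacency can cross between different $G$-supports. Once the uniform fiber size $\alpha(H) \geq 2$ is in hand the counting is short, but getting the facet characterization precisely right --- in particular the maximality conditions on both the support $A$ and each fiber $I_w$ --- is where the care is needed.
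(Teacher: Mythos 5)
Your proof is correct, and it rests on the same combinatorial obstruction as the paper's: once purity (via Topp--Volkmann) forces every fiber of every facet to have size $\alpha(H)\geq 2$, two facets of $\Ind(G[H])$ with different projections to $G$ must differ in at least $\alpha(H)\geq 2$ vertices, so they can never be ``one vertex apart.'' Where you diverge is in how this fact is converted into non-shellability. The paper argues directly from the shelling definition: it takes a putative shelling $F_1,\ldots,F_s$, lets $k$ be the first index with $\pi_1(F_k)\neq \pi_1(F_1)$, and derives the contradiction $|F_i \cap F_k| \leq (\alpha(G)-1)\alpha(H) < |F_k|-1$ for all $i<k$, violating the requirement that some earlier facet satisfy $|F_i \cap F_k| = |F_k|-1$; it needs only the identity $\alpha(G[H])=\alpha(G)\alpha(H)$, not a description of the facets. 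You instead isolate the standard lemma that a pure shellable complex is connected in codimension one (your induction using the shelling condition with $j=1$ is a correct proof of this under the paper's definition), and then show the facet-ridge graph of $\Ind(G[H])$ is disconnected, its components being grouped by $G$-support, with at least two nonempty support classes because $G$ has an edge. These are the same computation in different clothing---the paper's contradiction at the ``first crossing'' index $k$ is exactly the assertion that one cannot step in the dual graph from one support class to another---but your packaging buys modularity and a sharper structural picture (the explicit facet characterization $\bigcup_{w\in A}\{w\}\times I_w$ and the explicit decomposition of the dual graph), at the cost of having to verify that characterization carefully, whereas the paper's inline argument is shorter and entirely self-contained.
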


\begin{proof}  (Based upon \cite[Theorem 4.52]{HPhD}.)
We note that ${\rm Ind}(G)$ has at least two facets.  Indeed, $G$ must
have at least two vertices that are adjacent, say $x_1$ and $x_2$.
Because $\{x_1\}$ and $\{x_2\}$ are independent sets,
there exists facets of ${\rm Ind}(G)$,
that contain $x_1$ and $x_2$.  Furthermore, these facets must be distinct
because $x_1$ is adjacent to $x_2$.

If $H \neq K_n$, then there are at least two vertices 
in $V_H$ that are not adjacent,
and thus $\alpha(H) \geq 2$.  Furthermore, the construction
of $G[H]$ implies that $\alpha(G[H]) = \alpha(G)\alpha(H)$.

Suppose that ${\rm Ind}(G[H])$ has a shelling.  Let
$F_1,\ldots,F_s$ be the corresponding shelling. 
Because ${\rm Ind}(G[H])$ is shellable, it is pure, which implies
that both ${\rm Ind}(G)$ and ${\rm Ind}(H)$ are pure (this is a restatement
of Topp and Volkmann's \cite{TV} result about well-covered graphs).  So, every
facet of ${\rm Ind}(G[H])$ has cardinality $\alpha(G)\alpha(H)$. 
For each $i\in \{1,\ldots,s\}$, it follows that $\pi_1(F_i)$ is a
maximal independent set of $G$, that is, $\pi_1(F_i) \in {\rm Ind}(G)$.
Because ${\rm Ind}(G)$ has at least two facets, there is an index $k$
such that $\pi_1(F_1) = \cdots = \pi_1(F_{k-1}) \neq \pi_1(F_k)$.
Then, for each $i=1,\ldots,k-1$, we have
\[|F_i \cap F_k| \leq (\alpha(G)-1)\alpha(H) < \alpha(G)\alpha(H)-1 = |F_k|-1\]
where the strict inequality follows from the fact that $\alpha(H) \geq 2$.

However, because $F_1,\ldots,F_s$ is a shelling order, for every 
$1 \leq j <  k$, there exists some $x \in F_k \setminus F_j$ such that
$\{x\} = F_k \setminus F_i$ for some $1 \leq i <k$.  Because
$F_k$ and $F_i$ have the same cardinality, this implies that
$|F_i \cap F_k| = |F_k|-1$, which contradicts the inequality given
above.   So, ${\rm Ind}(G[H])$ cannot be shellable if $H \neq K_m$.
 \end{proof}

Moradi and Khosh-Ahang \cite{MK} introduced the 
expansion of the simplicial complex.  Although their results apply
to any simplicial complex, we only present their results for
independence complexes.  We first define the expansion of a graph.

\begin{definition}
Let $G$ be a graph on the vertex set $V = \{x_1,\ldots,x_n\}$
and let $(s_1,\ldots,s_n) \in \mathbb{N}^n_{>0}$ be an $n$-tuple
of positive integers.  The {\it $(s_1,\ldots,s_n)$-expansion of $G$},
denoted $G^{(s_1,\ldots,s_n)}$, is the graph on the vertex set
$V_{G^{(s_1,\ldots,s_n)}} = \{x_{1,1},\ldots,x_{1,s_1},x_{2,1},\ldots,
x_{2,s_2},\ldots,x_{n,1},\ldots,x_{n,s_n}\}$
with edge set 
$E_{G^{(s_1,\ldots,s_n)}} = \{\{x_{i,j},x_{k,l}\} ~|~ \{x_i,x_k\} \in E_G ~~\mbox{or}
~~i = k\}.$
\end{definition}

The next two results now follow from more general results
of Moradi and Khosh-Ahang.

\begin{theorem} \label{MoradiKhoshAhang}
Let $G$ be a finite simple graph and $(s_1,\ldots,s_n)
\in \mathbb{N}^{n}_{>0}$.
\begin{enumerate}
\item[$(i)$]\cite[Theorem 2.7]{MK} $G$ is vertex decomposable if and only if 
$G^{(s_1,\ldots,s_n)}$ is vertex decomposable.
\item[$(ii)$]\cite[Theorem 2.12]{MK} If $G$ is shellable, then $G^{(s_1,\ldots,s_n)}$ is shellable.
\end{enumerate}
\end{theorem}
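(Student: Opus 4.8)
The plan is to exploit how the operations ${\rm del}$ and ${\rm link}$ interact with the expansion: each sends the independence complex of an expansion to the independence complex of a strictly smaller expansion. Write $\mathbf s=(s_1,\dots,s_n)$, set $\Delta = {\rm Ind}(G^{(\mathbf s)})$, and fix a copy $x_{i,j}$ of $x_i$. Because the copies $x_{i,1},\dots,x_{i,s_i}$ form a clique and share all their neighbours outside block $i$, every independent set of $G^{(\mathbf s)}$ meets each block in at most one vertex, and the blocks it meets form an independent set of $G$. Reading this off gives two identities. Deleting $x_{i,j}$ from the graph yields
\[
{\rm del}_{\Delta}(x_{i,j}) =
\begin{cases}
{\rm Ind}\bigl(G^{(s_1,\dots,s_i-1,\dots,s_n)}\bigr) & \text{if } s_i\ge 2,\\
{\rm Ind}\bigl((G\setminus x_i)^{(\mathbf s\setminus i)}\bigr) & \text{if } s_i=1,
\end{cases}
\]
where $\mathbf s\setminus i$ is $\mathbf s$ with its $i$th entry removed; and since the closed neighbourhood of $x_{i,j}$ is all of block $i$ together with every block indexed by a neighbour of $x_i$,
\[
{\rm link}_{\Delta}(x_{i,j}) = {\rm Ind}\bigl((G\setminus N_G[x_i])^{(\mathbf s')}\bigr),
\]
with $\mathbf s'$ the restriction of $\mathbf s$ to the vertices of $G\setminus N_G[x_i]$. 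In every case the right-hand side is the independence complex of an expansion whose total number of copies $s_1+\cdots+s_n$ is strictly smaller. These two identities are the engine for both parts.

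For part $(i)$ I would run a single strong induction on $N=s_1+\cdots+s_n$, proving both implications together; when $N=n$ all $s_i=1$ and $G^{(\mathbf s)}=G$, so there is nothing to check. For the forward implication, assume $G^{(\mathbf s)}$ is vertex decomposable and some $s_j\ge 2$ (otherwise $G^{(\mathbf s)}=G$). Then $G^{(\mathbf s)}$ has an edge, so $\Delta$ is not a simplex and admits a vertex $x_{i,j}$ with ${\rm del}_{\Delta}(x_{i,j})$ and ${\rm link}_{\Delta}(x_{i,j})$ both vertex decomposable (a \emph{shedding vertex}). If $s_i\ge 2$, the deletion identity and the induction hypothesis applied to ${\rm Ind}(G^{(s_1,\dots,s_i-1,\dots,s_n)})$ already force $G$ to be vertex decomposable; if $s_i=1$, the two identities together with the induction hypothesis show that $G\setminus x_i$ and $G\setminus N_G[x_i]$ are vertex decomposable, so $x_i$ is a shedding vertex of $G$. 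For the reverse implication, assume $G$ is vertex decomposable. If $G$ has an edge, pick a shedding vertex $x_i$ of $G$ and take $x_{i,1}$ as the candidate shedding vertex of $G^{(\mathbf s)}$: the two identities realize ${\rm del}_{\Delta}(x_{i,1})$ and ${\rm link}_{\Delta}(x_{i,1})$ as independence complexes of expansions of the vertex decomposable graphs $G$ (or $G\setminus x_i$) and $G\setminus N_G[x_i]$ of smaller total size, so both are vertex decomposable by induction. The edgeless case is immediate, since then $G^{(\mathbf s)}$ is a disjoint union of cliques and the same identities (now with $N_G[x_i]=\{x_i\}$) show any $x_{j,1}$ with $s_j\ge 2$ is a shedding vertex.

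Part $(ii)$ is the delicate one, because—unlike vertex decomposability—shellability of ${\rm del}_{\Delta}(x)$ and ${\rm link}_{\Delta}(x)$ does not by itself imply shellability of $\Delta$, so neither Lemma \ref{prop} nor the induction above can simply be transcribed. Instead I would build a shelling by hand after reducing to the addition of a single copy: it suffices to show that if $G^{(\mathbf s)}$ is shellable then so is the expansion $G^{(\mathbf s + \mathbf e_i)}$ obtained by incrementing $s_i$, since iterating from $G=G^{(1,\dots,1)}$ reaches every expansion. (Purity is maintained throughout: the facets of ${\rm Ind}(G^{(\mathbf s)})$ are exactly the sets obtained from a facet of ${\rm Ind}(G)$ by choosing one copy in each block it uses, so $G^{(\mathbf s)}$ is well-covered whenever $G$ is.) The new vertex $w$ is a true twin of $v:=x_{i,1}$, i.e. $N[w]=N[v]$ in $G^{(\mathbf s+\mathbf e_i)}$; hence the link of $w$ in ${\rm Ind}(G^{(\mathbf s+\mathbf e_i)})$ equals ${\rm link}_{\Delta}(v)$, and deleting $w$ returns $G^{(\mathbf s)}$, so
\[
{\rm Ind}\bigl(G^{(\mathbf s+\mathbf e_i)}\bigr) \;=\; \Delta \;\cup\; \bigl(w * {\rm link}_{\Delta}(v)\bigr),
\]
the cone with apex $w$ over the link. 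Its facets are the facets of $\Delta$ together with one set $\{w\}\cup A$ for each facet $\{v\}\cup A$ of $\Delta$ containing $v$.

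Now I would fix a shelling $F_1,\dots,F_m$ of $\Delta$ and use the standard fact that the facets of $\Delta$ containing $v$, taken in the induced order and with $v$ deleted, give a shelling $A_1,A_2,\dots$ of ${\rm link}_{\Delta}(v)$. The claim is that
\[
F_1,\dots,F_m,\ \{w\}\cup A_1,\ \{w\}\cup A_2,\ \dots
\]
is a shelling of ${\rm Ind}(G^{(\mathbf s+\mathbf e_i)})$. Verifying the shelling condition for a new facet $\{w\}\cup A_k$ is the crux: its intersection with any earlier facet avoiding $w$ lies in the codimension-one face $A_k=(\{w\}\cup A_k)\setminus w$, which is itself realized by the earlier facet $\{v\}\cup A_k$ of $\Delta$; and its intersection with an earlier twin-facet $\{w\}\cup A_\ell$ is $\{w\}\cup(A_k\cap A_\ell)$, which the shelling of ${\rm link}_{\Delta}(v)$ confines to a codimension-one face $\{w\}\cup(A_k\setminus a)$. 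The hard part will be the bookkeeping that these codimension-one faces exhaust the entire intersection with the previous facets, so that the restriction of $\{w\}\cup A_k$ is pure of codimension one, which is exactly the shelling condition. Granting the single-copy step, part $(ii)$ then follows by induction on $s_1+\cdots+s_n$.
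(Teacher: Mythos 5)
Your proposal is correct, but you should know that the paper itself gives no proof of this theorem at all: it is cited from Moradi and Khosh-Ahang \cite{MK}, whose Theorems 2.7 and 2.12 are proved for expansions of \emph{arbitrary} simplicial complexes, and the paper merely specializes those results to independence complexes. Your argument is therefore a genuinely different, self-contained route. The two identities expressing ${\rm del}_{\Delta}(x_{i,j})$ and ${\rm link}_{\Delta}(x_{i,j})$ as independence complexes of strictly smaller expansions are right, and they make part $(i)$ a clean strong induction on $s_1+\cdots+s_n$ exactly as you run it (this matches the paper's Definition 2.1, which imposes no extra shedding condition; under the stricter Provan--Billera definition \cite{PB} the same induction still works, since a facet of the link can always be extended in the deletion by a second copy $x_{i,2}$ when $s_i\geq 2$, or by a vertex witnessing that $x_i$ is a shedding vertex of $G$ when $s_i=1$). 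For part $(ii)$, the reduction to duplicating a single vertex, the identity ${\rm Ind}(G^{(\mathbf{s}+\mathbf{e}_i)})=\Delta\cup(w\ast{\rm link}_{\Delta}(v))$, and the concatenated order $F_1,\ldots,F_m,\{w\}\cup A_1,\ldots,\{w\}\cup A_r$ are all correct; moreover, the ``hard bookkeeping'' you defer at the end is in fact immediate under the paper's definition of shellability, which for each pair $j<i$ asks only for \emph{one} element $x\in F_i\setminus F_j$ and \emph{one} earlier facet $F_k$ with $F_i\setminus F_k=\{x\}$, not that the codimension-one faces exhaust the whole intersection with all earlier facets. Concretely: when the earlier facet lies in $\Delta$, take $x=w$ and $F_k=\{v\}\cup A_l$, the facet of $\Delta$ matching the current twin facet $\{w\}\cup A_l$; when the earlier facet is a twin facet $\{w\}\cup A_{l'}$, apply the shelling of ${\rm link}_{\Delta}(v)$, which, as you note, is inherited from the shelling of $\Delta$ by restricting to the facets containing $v$ and deleting $v$. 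So nothing essential is missing. What the paper's citation buys is generality (the results of \cite{MK} hold for expansions of any simplicial complex); what your proof buys is a short, explicit, purely graph-theoretic argument that produces concrete shellings, which is all this paper actually needs.
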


%%%%%%%%%%%%%%%%%%%%%%%%%%%%%%%%%%%%%%%%%%%%%%%%%%%%%%%%%%%%%%%%%%%%%%%

\section{Proof of Theorem 1.1}

\begin{proof}(of Theorem \ref{maintheorem})
Statement $(a)$ was already proved in the introduction.  To prove
$(b)$ and $(c)$, observe that
if $G$ has $n$ vertices and $(m,\ldots,m) \in \mathbb{N}^n_{>0}$,
then $G^{(m,\ldots,m)} = G[K_m]$.

$(b)$  Suppose that $G[H]$ is vertex decomposable.  Because
$G[H]$ is also shellable by Theorem \ref{prop}, Theorem \ref{nonresult}   
implies that $H = K_m$ for some $m \geq 1$.  So $G[H] = G^{(m,\ldots,m)}$.
Because $G^{(m,\ldots,m)}$ is vertex decomposable, $G$ is vertex
decomposable by
Theorem \ref{MoradiKhoshAhang}.  For
the converse, because $H = K_m$ and $G$ is vertex decomposable,
$G[H] = G^{(m,\ldots,m)}$ is vertex decomposable by Theorem \ref{MoradiKhoshAhang}.

$(c)$  Because $H = K_m$ and $G$ is shellable,
$G[H] = G^{(m,\ldots,m)}$ is shellable by  Theorem \ref{MoradiKhoshAhang}.  
As well, if $G[H]$ is shellable, we must have $H = K_m$ for some 
$m \geq 1$ by Theorem \ref{nonresult}.
\end{proof}

%%%%%%%%%%%%%%%%%%%%%%%%%%%%%%%%%%%%%%%%%%%%%%%%%%%%%%%%%%%%%%%%%%%%%%

\section{Applications to circulant graphs}

We define a {\it circulant graph} on $n \geq 1$ vertices as follows.
Let $S \subseteq \{1,2,\ldots,\floor{\frac{n}{2}}\}$. The circulant graph 
$C_n(S)$ is the graph with $V = \{x_0, x_1,\ldots,x_{n-1}\}$, such that 
$\{x_a, x_b\}$ is an edge of $C_n(S)$ if 
and only if $|a-b| \in S$ or $n-|a-b| \in S$.
See
\cite{BGM,Brown11,EVMVT,HPhD,VVW} for some recent
papers on the properties of $C_n(S)$, especially well-covered circulant 
graphs.

Hoshino proved the following result about the lexicographical
products of circulant graphs (in fact, the original result
describes how to construct the lexicographical product from the
data describing the two initial circulant graphs).

\begin{theorem}\label{buildcirculants} \cite[Theorem 2.31]{HPhD}
Let $G = C_{n}(S_1)$ and $H = C_{m}(S_2)$ be circulant graphs.
Then $G[H]$ is also a  circulant graph.
\end{theorem}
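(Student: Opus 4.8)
The plan is to produce an explicit cyclic relabeling of the $nm$ vertices of $G[H]$ under which adjacency depends only on the difference of labels modulo $nm$; this is exactly the condition that $G[H]$ be a circulant graph $C_{nm}(S)$ for a suitable symmetric connection set $S$. Writing $V_G = \{x_0,\dots,x_{n-1}\}$ and $V_H = \{y_0,\dots,y_{m-1}\}$ with $G = C_n(S_1)$ and $H = C_m(S_2)$, I would define the bijection $\psi \colon \{0,1,\dots,nm-1\} \to V_G \times V_H$ by $\psi(t) = (x_{t \bmod n}, y_{\lfloor t/n \rfloor})$, and use it to relabel $G[H]$. In this mixed-radix labeling the $G$-coordinate $i = t \bmod n$ is the low-order digit and the $H$-coordinate $j = \lfloor t/n \rfloor$ the high-order digit, so the vertex $(x_i,y_j)$ receives the label $t = jn + i$. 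The feature that makes everything work is the equivalence: two vertices lie in a common $G$-fiber, i.e.\ $i = i'$, if and only if $n \mid (t - t')$.

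The key computation is then to show that whether $\psi(t) = (x_i,y_j)$ and $\psi(t') = (x_{i'},y_{j'})$ are adjacent in $G[H]$ is a function of $d := (t - t') \bmod nm$ alone. I would split into the two mutually exclusive cases dictated by the definition of the lexicographical product. If $n \nmid d$, then $i \neq i'$, the $H$-coordinates are irrelevant, and the pair is adjacent exactly when $\{x_i,x_{i'}\} \in E_G$; since $d \equiv i - i' \pmod n$, this is determined by $d$. If $n \mid d$, then $i = i'$, and the pair is adjacent exactly when $\{y_j,y_{j'}\} \in E_H$; here $d = (j - j')n$, so $d/n \equiv j - j' \pmod m$ is again determined by $d$. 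Because $G$-type edges force $i \neq i'$ while $H$-type edges force $i = i'$, the two cases never interfere, and the hypotheses $S_1 \subseteq \{1,\dots,\lfloor n/2 \rfloor\}$ and $S_2 \subseteq \{1,\dots,\lfloor m/2 \rfloor\}$ ensure $0 \not\equiv \pm s$ for $s \in S_1 \cup S_2$, so the case boundary $n \mid d$ is respected. Letting $f(d) = 1$ precisely when the pair with difference $d$ is adjacent, symmetry of the edge relation gives $f(d) = f(-d)$ and $f(0) = 0$, whence $S = \{d : 1 \le d \le \lfloor nm/2 \rfloor,\ f(d) = 1\}$ yields $G[H] = C_{nm}(S)$. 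One can even record $S$ explicitly: $d \in S$ iff $d \equiv \pm s \pmod n$ for some $s \in S_1$, or $n \mid d$ and $d/n \equiv \pm s \pmod m$ for some $s \in S_2$, which recovers Hoshino's description of $G[H]$ directly from the data $(n,S_1)$ and $(m,S_2)$.

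The main obstacle, and really the only genuine idea in the argument, is the correct choice of labeling: making the $G$-coordinate the low-order digit, so that ``lying in a common $G$-fiber'' becomes the congruence $n \mid (t - t')$. The opposite convention $t = im + j$ does \emph{not} work, since there a common $G$-fiber corresponds to $\lfloor t/m \rfloor = \lfloor t'/m \rfloor$, which is not a congruence condition and breaks the circulant structure. For example, with $G = K_2 = C_2(\{1\})$ and $H = \overline{K_3} = C_3(\emptyset)$ we have $G[H] = K_{3,3}$, which appears as $C_6(\{1,3\})$ under $t = jn + i$ but is not circulant under $t = im + j$. Once the labeling is pinned down, the remaining verification is the short case analysis above, so after isolating this point the proof is routine.
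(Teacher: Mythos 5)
Your proof is correct. The paper itself contains no proof of this statement---it is quoted directly from Hoshino's thesis \cite{HPhD}---and your argument, an explicit relabeling $t = jn + i$ (with the $G$-coordinate as the low-order digit) under which adjacency in $G[H]$ depends only on $(t - t') \bmod nm$, culminating in the explicit connection set $S$, is essentially the same constructive approach as the cited result; indeed, the paper remarks that Hoshino's original theorem likewise describes how to build $G[H]$ directly from the data $(n,S_1)$ and $(m,S_2)$.
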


Because $K_m$ is the circulant graph $K_m = 
C_n(\{1,2,\ldots,\lfloor \frac{m}{2} \rfloor\})$, Theorem
\ref{maintheorem}, combined with
the Theorem \ref{buildcirculants} implies the following
result.

\begin{theorem}\label{corcirculant}
Let $G$ be a circulant graph such that $G$ is vertex
decomposable, respectively shellable.  Then $G[K_m]$ with
$m\geq 1$ is a circulant graph that is also vertex decomposable,
respectively, shellable.
\end{theorem}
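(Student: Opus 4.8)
The plan is to assemble the statement from the two results highlighted just before it: Theorem \ref{buildcirculants}, which keeps us inside the class of circulant graphs, and Theorem \ref{maintheorem}, which transfers vertex decomposability and shellability across the lexicographical product. Since both inputs are already available, the work is in combining them correctly and attending to one degenerate case.

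First I would record that $K_m$ is itself a circulant graph, namely $K_m = C_m(\{1,\ldots,\lfloor m/2\rfloor\})$, as noted in the paragraph preceding the statement. An immediate application of Theorem \ref{buildcirculants} with $H = K_m$ then shows that $G[K_m]$ is a circulant graph. This disposes of the ``is a circulant graph'' part of the conclusion, and it is independent of whether $G$ happens to be vertex decomposable or shellable.

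Next I would transfer the combinatorial/topological property, where one must separate the two cases appearing in Theorem \ref{maintheorem}. If $G$ is not a graph of isolated vertices, then Theorem \ref{maintheorem}$(b)$ gives that $G$ vertex decomposable together with the choice $H = K_m$ forces $G[K_m]$ vertex decomposable, while Theorem \ref{maintheorem}$(c)$ gives the analogous implication for shellability. If instead $G$ consists of isolated vertices, I would invoke Theorem \ref{maintheorem}$(a)$, which reduces the question to whether $\mathrm{Ind}(K_m)$ has the property; since $\mathrm{Ind}(K_m)$ is the zero-dimensional complex on the $m$ singletons, it is both vertex decomposable and shellable, so the conclusion again follows.

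The only point requiring care --- and the closest thing to an obstacle --- is the isolated-vertices case, since parts $(b)$ and $(c)$ explicitly exclude it while part $(a)$ phrases the hypothesis in terms of $\mathrm{Ind}(H)$ rather than $\mathrm{Ind}(G)$. I would therefore verify directly that $\mathrm{Ind}(K_m)$ is vertex decomposable and shellable so that part $(a)$ applies cleanly. Alternatively, one can avoid the casework altogether by recalling from Section~3 that $G[K_m] = G^{(m,\ldots,m)}$ and applying Theorem \ref{MoradiKhoshAhang} directly, which preserves vertex decomposability and shellability regardless of the structure of $G$; this gives a uniform argument at the cost of bypassing the more descriptive Theorem \ref{maintheorem}.
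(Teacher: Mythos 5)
Your proposal is correct and follows essentially the same route as the paper, which simply combines Theorem \ref{buildcirculants} (with $K_m = C_m(\{1,\ldots,\lfloor m/2\rfloor\})$) and Theorem \ref{maintheorem}; your explicit treatment of the isolated-vertices case via part $(a)$, and the alternative via $G[K_m]=G^{(m,\ldots,m)}$ and Theorem \ref{MoradiKhoshAhang}, only fill in details the paper leaves implicit.
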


\begin{remark}
Many families of vertex decomposable and shellable circulant graphs
have been  identified \cite{EVMVT,VVW}.  From any such
graph, we can now build an infinite family of circulant graphs
that is either vertex decomposable or shellable using the above
result.
\end{remark}

It has long been known that the converse of Theorem \ref{prop} is false
(see \cite{PB}).   However,
it was less clear whether the converse of Theorem \ref{prop} was still
false if we restricted to independence complexes of graphs. 
To the best of our knowledge, the circulant graph
$C_{16}(1,4,8)$ found in \cite[Theorem 6.1]{EVMVT}
is the first example of a graph that is shellable
but not vertex decomposable.
By combining Theorem \ref{MoradiKhoshAhang} with this example,
we have an infinite family of 
independence complexes which are shellable but not vertex decomposable.
In addition, Theorems \ref{maintheorem} and 
\ref{corcirculant} allow us to make an infinite
family of circulant graphs with this property.  

\begin{theorem}\label{nonexamples}
Let $G = C_{16}(1,4,8)$ and $(s_1,\ldots,s_n) \in \mathbb{N}^{n}_{>0}$.
Then $G^{(s_1,\ldots,s_n)}$ is shellable but not vertex decomposable.
Furthermore, for all $m \geq 1$, $G^{(m,\ldots,m)} = G[K_m]$ is a circulant
graph that is shellable but not vertex decomposable.
\end{theorem}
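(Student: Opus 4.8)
The plan is to prove Theorem \ref{nonexamples} by combining the general expansion results of Moradi and Khosh-Ahang (Theorem \ref{MoradiKhoshAhang}) with the single known base example $G = C_{16}(1,4,8)$, rather than by any direct combinatorial argument about the expanded graphs. The strategy rests entirely on the fact that both shellability and vertex decomposability are controlled, for expansions, by the corresponding property of $G$ itself.

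First I would establish the positive half: $G^{(s_1,\ldots,s_n)}$ is shellable. By \cite[Theorem 6.1]{EVMVT}, the graph $G = C_{16}(1,4,8)$ is shellable. Theorem \ref{MoradiKhoshAhang}$(ii)$ states that if $G$ is shellable then $G^{(s_1,\ldots,s_n)}$ is shellable for every tuple $(s_1,\ldots,s_n) \in \mathbb{N}^n_{>0}$, so this half is immediate.

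Next I would establish the negative half: $G^{(s_1,\ldots,s_n)}$ is \emph{not} vertex decomposable. Here the key point is that Theorem \ref{MoradiKhoshAhang}$(i)$ is an equivalence, not merely an implication: $G$ is vertex decomposable if and only if $G^{(s_1,\ldots,s_n)}$ is vertex decomposable. Since $G = C_{16}(1,4,8)$ is known \emph{not} to be vertex decomposable (again \cite[Theorem 6.1]{EVMVT}, which is precisely the example exhibiting a shellable-but-not-vertex-decomposable independence complex), the ``only if'' direction of the equivalence forces $G^{(s_1,\ldots,s_n)}$ to fail vertex decomposability as well. Combining the two halves gives the first assertion.

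Finally, for the circulant statement I would specialize to the uniform tuple $(m,\ldots,m)$. As observed in the proof of Theorem \ref{maintheorem}, if $G$ has $n$ vertices then $G^{(m,\ldots,m)} = G[K_m]$, and by Theorem \ref{buildcirculants} the lexicographical product of two circulant graphs is again circulant (using $K_m = C_m(\{1,\ldots,\lfloor m/2\rfloor\})$). Hence $G[K_m]$ is a circulant graph, and it inherits shellability and the failure of vertex decomposability from the general expansion case just proved. The only thing to verify is that $C_{16}(1,4,8)$ genuinely realizes both properties as claimed in \cite{EVMVT}; since we are permitted to invoke that result, there is no real obstacle, and the argument is essentially a bookkeeping assembly of the cited equivalences. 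The one subtlety worth flagging is to make sure the \emph{equivalence} in Theorem \ref{MoradiKhoshAhang}$(i)$ is used for the non-vertex-decomposable conclusion, since an implication in only one direction would not suffice.
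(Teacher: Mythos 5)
Your proposal is correct and follows essentially the same route as the paper: shellability of the expansion via Theorem \ref{MoradiKhoshAhang}$(ii)$, failure of vertex decomposability via the \emph{equivalence} in Theorem \ref{MoradiKhoshAhang}$(i)$ applied to the base example $C_{16}(1,4,8)$ from \cite[Theorem 6.1]{EVMVT}, and the circulant claim via $G^{(m,\ldots,m)} = G[K_m]$ together with Theorem \ref{buildcirculants}. Your explicit flagging of the need for the ``if and only if'' in Theorem \ref{MoradiKhoshAhang}$(i)$ is exactly the point on which the paper's argument also rests.
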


%\begin{proof}
%By \cite[Theorem 6.1]{EVMVT}, the graph $G$ is shellable,
%so $G^{(s_1,\ldots,s_n)}$ is shellable by Theorem \ref{MoradiKhoshAhang}.
%Because $G$ is not vertex decomposable,
%Theorem \ref{MoradiKhoshAhang} again implies that $G^{(s_1,\ldots,
%s_n)}$ cannot be vertex decomposable.   The second statement
%follows from Theorem \ref{corcirculant}
%\end{proof}

\begin{remark}
We end with a couple of concluding remarks.
The most obvious question to ask is if the
converse of Theorem \ref{maintheorem} $(c)$ holds, or more
generally, does the converse of  \cite[Theorem 2.12]{MK} hold.
To prove the converse, we would need to determine if ${\rm Ind}(G[K_m])$
being shellable implies that $G$ 
is shellable.  

Our strategy to construct an infinite family of shellable but not 
vertex decomposable graphs is to find an initial graph with this property,
and then apply Theorem \ref{MoradiKhoshAhang}.  However, finding
the initial graph with this property is quite difficult.  Besides
the graph $G = C_{16}(1,4,8)$, we know of only one other graph
with this property, namely the circulant graph $C_{20}(1,5,10)$,
which was verified computationally using {\it Macaulay2} \cite{Mt}.  
We were also able
to computationally verify that $C_{24}(1,6,12)$ is not vertex decomposable,
although we have not verified it is shellable (it is Cohen-Macaulay).
Based upon on this very slim evidence, we suspect that 
the graphs $G = C_{4s}(1,s,2s)$ with $s \geq 4$ are
shellable but not vertex decomposable.   The first three graphs
in this family can be seen in Figure \ref{family}.
\begin{figure}
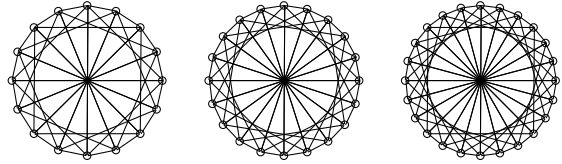

\[\Circulant{16}{1,4,8}
\hspace{.5cm}
\Circulant{20}{1,5,10}
\hspace{.5cm}
\Circulant{24}{1,6,12}\]
\caption{The circulant graphs $C_{16}(1,4,8), C_{20}(1,5,10),$ and $C_{24}(1,6,12)$}
\label{family}
\end{figure}
\end{remark}

%%%%%%%%%%%%%%%%%%%%%%%%%%%%%%%%%%%%%%%%%%%%%%%%%%%%%%%%%%%%%%%%%%%%%%%%%%%%%%%%%%%%%

%%%%%%%%%%%%%%%%%%%%%%%%%%%%%%%%%%%%%%%%%%%%%%%%%%%%%%%%%%%%%%

\end{document}